\DeclareMathOperator{\eal}{Re}
\DeclareMathOperator{\imag}{Im}
\newtheorem{ut}{Theorem}
\newtheorem{upo}[ut]{Observation}
\newtheorem{up}[ut]{Proposition}
\newtheorem{lemma}[ut]{Lemma}
\newtheorem{uc}[ut]{Corollary}
\numberwithin{equation}{section}
\theoremstyle{definition}
\newtheorem*{ur}{Remark}
\theoremstyle{definition}
\begin{document}

\title[Dimension of radial Julia sets]{The topological dimension of radial Julia sets}

\subjclass[2010]{37F10, 30D05, 54F45} 
\keywords{radial Julia set, complex exponential, meandering set, zero-dimensional}
\address{Department of Mathematics, Auburn University at Montgomery, Montgomery 
AL 36117, United States of America}
\email{dsl0003@auburn.edu}
\author{David S.  Lipham}

\begin{abstract}  
We prove that the meandering  set for   $f_a(z)=e^z+a$ is homeomorphic to the space of irrational numbers  whenever $a$ belongs to the Fatou set of $f_a$. This extends recent results by Vasiliki Evdoridou  and  Lasse Rempe. It implies that  the radial Julia set of $f_a$ has topological dimension zero for all attracting and parabolic parameters, including all $a\in (-\infty,-1]$.  Similar results are obtained for Fatou's function  $f(z)=z+1+e^{-z}$.
\end{abstract}

\maketitle

\section{Introduction}

The focus of this paper is the class of exponential functions $$f_a(z)=e^z+a$$ for attracting and parabolic  parameters $a\in \mathbb C$.  The Fatou set $F(f_a)$ is the    set of normality for the family of iterates $\{f_a^n:n\in \mathbb N\}$, and the Julia set $J(f_a)=\mathbb C\setminus F(f_a)$ is its complement.   The   \textit{escaping set} $I(f_a)$ is the set of points $z\in \mathbb C$ such that  $f_a^n(z)\to\infty$, where   $\infty$ is the point at infinity on the Riemann sphere. It is known that $I(f_a)\subset J(f_a)$ in general, and   $f_a$ has an attracting or parabolic  cycle if and only if $a\in F(f_a)$ \cite[Proposition 2.1]{vas}.

The \textit{radial Julia set} $J_{\mathrm{r}}(f_a)$ is defined to be the set of points $z\in J(f_a)$ with the following property: There exists $\delta>0$ and infinitely many $n\in \mathbb N$ such that the spherical disc of radius $\delta$ around $f^n(z)$ can be pulled back univalently along the orbit of $z$.  For the parameters of interest in this paper, $J_{\mathrm{r}}(f_a)$ is approximately equal to the set of non-escaping points in $J(f_a)$. Indeed, $$J_{\mathrm{r}}(f_a)\subset J(f_a)\setminus I(f_a)$$ for all $a\in F(f_a)$, and  if $f_a$ has an attracting periodic orbit then $$J_{\mathrm{r}}(f_a)=J(f_a)\setminus I(f_a);$$ see \cite[Corollary 2.2]{vas}. The \textit{meandering set} $J_{\mathrm{m}}(f_a)$ is the larger collection of points  in $J(f_a)$ whose orbits  do not converge  to $\infty$ as quickly as possible. Precise definitions of $J_{\mathrm{m}}(f_a)$ and $J_{\mathrm{r}}(f_a)$ are provided in Section 2.

 All of the sets mentioned above will be viewed as topological subspaces of the complex plane $\mathbb C$.  A topological space $X$ is 
\begin{itemize}
\item \textit{totally separated} if for every two points $x,y\in X$  there is a clopen subset of $X$ which contains $x$ and misses $y$; and 
\item     \textit{zero-dimensional} if $X$ has a basis of clopen sets.
\end{itemize} 
Note that every zero-dimensional space is totally separated, but the converse is false. In fact, $J(f_a)$ contains totally separated spaces which are not zero-dimensional; see the Remark  in Section 3.2. 

Evdoridou and Rempe recently proved that if $a\in F(f_a)$,  then  $J_{\mathrm{m}}(f_a)\cup \{\infty\}$ (as a subspace of  the 
 Riemann sphere $\mathbb C\cup \{\infty\}$)    is  totally separated \cite[Theorem 1.3]{vas}. In this paper, we strengthen their result by proving  that $J_{\mathrm{m}}(f_a)$ and $J_{\mathrm{m}}(f_a)\cup \{\infty\}$ are  each homeomorphic to the zero-dimensional space of irrational numbers $\mathbb P$.   Since $J_{\mathrm{r}}(f_a)\subset J_{\mathrm{m}}(f_a)$ and zero-dimensionality is hereditary, we find that $J_{\mathrm{r}}(f_a)$ and $J_{\mathrm{r}}(f_a)\cup \{\infty\}$ are topologically zero-dimensional as well. 
 
  By contrast,  the Hausdorff dimension of $J_{\mathrm{r}}(f_a)$  is always greater than $1$; see \cite[Theorem 2.1]{uz} and  \cite[Theorem 2]{ka}. The topological dimension when $a\in J(f_a)$  can also differ. For instance, if $f_a$ is postsingularly finite (meaning that $\{f^n_a(a):n\in \mathbb N\}$ is finite),  then $J_{\mathrm{r}}(f_a)=J(f_a)\setminus I(f_a)$   contains an unbounded connected set; see \cite[Corollary 2.2(c)]{vas}, 
 \cite[Theorem 4.3]{sz1} and  \cite[Proposition 2.4(e)]{vas}. In this case $J_{\mathrm{r}}(f_a)$ and $J_{\mathrm{m}}(f_a)$ must have positive topological dimension.  A simple example of a postsingularly finite parameter is $a=\ln(\pi)+\frac{\pi}{2}i$. 
 
At the end of the paper we will apply our work to  Fatou's function $$z\mapsto z+1+e^{-z}.$$ This will  strengthen the  results of \cite{evd} and \cite[Section 5]{vas}. In particular, we will see  that the entire non-escaping set for Fatou's function is zero-dimensional.
\subsection*{Structure of the paper}
In Section 2 we will  state the  definitions of $J_{\mathrm{r}}(f_a)$,  $J_{\mathrm{m}}(f_a)$ and related sets, including the fast escaping set $A(f_a)$.  The  endpoints and connected components  of $J(f_a)$ are analyzed  in Section 3, and in Section 4 we  prove two crucial lemmas regarding $A(f_a)$. All of that will be integrated  in Section 5  to prove our main results.  And in Section 6  we consider Fatou's function.

  \section{Formal definitions}
  
  In the following definitions we assume that $f$ is a transcendental entire function, and $\widehat{\mathbb C}=\mathbb C\cup \{\infty\}$ is the Riemann sphere.

  \subsection{Definition of $J_{\mathrm{r}}(f)$}The radial Julia set was introduced by Urba\'{n}ski \cite{urb} and McMullen \cite{mcm}. It is defined as follows.
  
 For any $z_0\in \mathbb C$ let $\mathbb D_\delta(z_0)=\{z \in \widehat{\mathbb C} : \text{dist}(z_0, z) < \delta\}$, where the distance is with respect to the conformal metric on $\widehat{\mathbb C}$ (the unique metric of constant curvature $1$).   If $ A\subset \widehat{\mathbb C}$ and $z\in A$,  then let $\text{Comp}(z,A)$ denote the connected component of $z$ in $A$. Define $J_{\mathrm{r}}(f)$ to be the set of points $z\in J(f)$ with the  property: There exists $\delta > 0$ and infinitely many $n\in \mathbb N$   such that 
$$f^n: \text{Comp}(z,f^{-n}(\mathbb D_\delta(f^n(z))))\to \mathbb D_\delta(f^n(z))$$
 is a conformal isomorphism.  
 
  The definition of $J_{\mathrm{r}}(f)$ will not be  directly applied in this paper.  We will only need the fact $J_{\mathrm{r}}(f)\subset J(f)\setminus I(f)$ for the functions studied here. 

  \subsection{Definitions of $A(f)$ and $J_{\mathrm{m}}(f)$} The  \textit{fast escaping set} $A(f)$ was introduced by Bergweiler and Hinkkanen \cite{berg}, and  was studied further by Rippon and Stallard \cite{rip}. It is defined as follows.
  
   For each $r>0$ let  $M(r) =M(r,f)=  \max \{|f(z)|:|z|=r\}.$   Since $f$ is transcendental, we can choose $R>0$ sufficiently large so that $M^n(R) \to \infty$ and $M(r)>r$ for all $r\geq R$. Define  $$A(f) =\{z\in \mathbb C:\exists\;\ell\geq 0\text{ such that }f^{\ell+n}(z)\geq M^n(R)\text{ for all }n\in \mathbb N\}.$$
This definition is independent of the choice of $R$ by \cite[Theorem 2.2]{rip}.    Informally, $A(f)$ is the set of points $z\in \mathbb C$ such that $f^n(z)\to \infty$ as quickly as possible. Note that $A(f)\subset I(f)$. 

The set  $J_{\mathrm{m}}(f)$ is defined to be the complement $J(f)\setminus A(f).$ As previously indicated, $J_{\mathrm{m}}(f_a)\supset J_{\mathrm{r}}(f_a)$ for all $a\in F(f_a)$.
  

\section{Geometry of Julia sets}  
In this section we examine some important aspects of  $J(f_a)$ and its connected components. 
\textit{Throughout the section we  assume $a\in F(f_a)$.}

 \subsection{General structure of $J(f_a)$} For certain attracting parameters, including all $a\in (-\infty,-1)$,   $J(f_a)$ is homeomorphic to a  particular subspace $\overline X$ of $ \mathbb Z^\mathbb N \times [0,\infty)$. The set $\overline X$ is constructed in \cite{rem2} and is of the form 
$$\overline X=\bigcup_{\uline{s} \in \mathbb Z ^{\mathbb N}}  \{\uline{s}\}\times [t_{\uline{s}},\infty)$$ 
where each $t_{\uline{s}}\in[0,\infty]$ and $[\infty,\infty)=\varnothing$.   More generally,  by \cite[Theorem 9.1]{rem2}, for any parameter $a\in F(f_a)$  there is  a closed continuous  surjection $$H: \overline X\to J(f_a).$$   
If $H$ is a homeomorphism (e.g.\ when $a\in (-\infty,-1)$),  then $J(f_a)$ is a \textit{Cantor bouquet} in the sense of  \cite{aa} and \cite[Definition 2.7]{rem}. Otherwise,  $J(f_a)$  is a  \textit{pinched Cantor bouquet}.   Figures 1 and 2 show  a Cantor bouquet Julia set for $a=-2$, and  pinched Cantor bouquet Julia sets for three other parameters.  
   
We now indicate some features of the escaping set $I(f_a)$ and the pinching that may occur.  There is  a set $X\subset \overline X$ (the same $X$ defined in \cite{rem2}) such that:
   \begin{enumerate}
     \item $ \{\uline{s}\}\times (t_{\uline{s}},\infty)\subset X$ for all $\uline{s} \in \mathbb Z^{\mathbb N}$ (see \cite[Observation 3.1]{rem2}),
     \item  $H^{-1}(I(f_a))=X$ (see \cite[Corollary 5.3]{rem2}), and
     \item $H\restriction X:X\to I(f_a)$ is a homeomorphism (see \cite[Theorem 9.1]{rem2}).
    \end{enumerate}
Thus, if  $H^{-1}(z)$ is non-degenerate then  $z$ may be viewed as a point of $J(f_a)$ where multiple rays  of $\overline X$ are ``pinched together'' at their endpoints. Observe also that a non-degenerate pre-image  $H^{-1}(z)$ cannot intersect $X$, so the pinched points in $J(f_a)$ are all non-escaping (and thus belong to $J_{\mathrm{r}}(f_a)$ and $J_{\mathrm{m}}(f_a)$).

\subsection{Endpoints of $J(f_a)$} The endpoints of rays of $\overline X$ (the points $\langle \uline{s}, t_{\uline{s}}\rangle\in \overline X$)  are closely related to the endpoints of $J(f_a)$ that are  defined as follows.   

A point $z\in J(f_a)$ is on a \textit{hair}  if there exists an arc $\alpha:[-1,1]\hookrightarrow I(f_a)$ such that $\alpha(0)= z$. A point $z_0\in J(f_a)$ is an \textit{endpoint} if $z_0$ is not on a hair and there is an arc $\alpha:[0,1]\hookrightarrow J(f_a)$ such that $\alpha(0)=z_0$ and $\alpha(t)\in I(f_a)$ for all $t>0$. 

 The set of all endpoints of $J(f_a)$ is denoted $E(f_a)$.

\begin{upo}$E(f_a)=\{H(\langle \uline{s},t_{\uline{s}}\rangle):\uline{s}\in \mathbb Z^{\mathbb N} \text{ and } t_{\uline{s}}<\infty\}$.\end{upo}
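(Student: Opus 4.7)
The plan is to establish both inclusions, relying repeatedly on property~(3) that $H\restriction X$ is a homeomorphism onto $I(f_a)$, together with the observation that every connected subset of $\overline X\subset\mathbb Z^{\mathbb N}\times[0,\infty)$ lies in a single fiber $\{\uline{s}\}\times[t_{\uline{s}},\infty)$, since $\mathbb Z^{\mathbb N}$ is totally disconnected and the projection onto the first coordinate is continuous.

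For the inclusion $\supset$, fix $\uline{s}\in\mathbb Z^{\mathbb N}$ with $t_{\uline{s}}<\infty$, set $z:=H(\langle\uline{s},t_{\uline{s}}\rangle)$, and define $\alpha\colon[0,1]\to J(f_a)$ by $\alpha(t):=H(\langle\uline{s},t_{\uline{s}}+t\rangle)$. By~(1), $\alpha((0,1])\subset H(X)=I(f_a)$, and injectivity of $\alpha\restriction(0,1]$ follows from~(3). Moreover $\alpha(0)\neq\alpha(t_0)$ for any $t_0>0$: otherwise $z\in I(f_a)$ would force $\langle\uline{s},t_{\uline{s}}\rangle\in H^{-1}(I(f_a))=X$, so $z$ would have the two distinct preimages $\langle\uline{s},t_{\uline{s}}\rangle,\langle\uline{s},t_{\uline{s}}+t_0\rangle$ in $X$, contradicting~(3). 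Hence $\alpha$ witnesses the first clause of the endpoint definition. For the second clause, if $z\notin I(f_a)$ then $z$ is vacuously not on a hair; while if $z\in I(f_a)$, then $H^{-1}(z)=\{\langle\uline{s},t_{\uline{s}}\rangle\}$ sits in $X$, and any arc $\beta\colon[-1,1]\hookrightarrow I(f_a)$ with $\beta(0)=z$ lifts through $(H\restriction X)^{-1}$ to an arc $\widetilde\beta\colon[-1,1]\hookrightarrow X$ with $\widetilde\beta(0)=\langle\uline{s},t_{\uline{s}}\rangle$, whose connected image must lie in the fiber $\{\uline{s}\}\times[t_{\uline{s}},\infty)$. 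The continuous injection $\pi_2\circ\widetilde\beta\colon[-1,1]\to[t_{\uline{s}},\infty)$ would then attain its minimum $t_{\uline{s}}$ at the interior point~$0$, contradicting strict monotonicity. Thus $z\in E(f_a)$.

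For the inclusion $\subset$, suppose $z\in E(f_a)$ and let $\alpha\colon[0,1]\hookrightarrow J(f_a)$ be a witnessing arc. Lifting $\alpha\restriction(0,1]$ through $(H\restriction X)^{-1}$ yields a continuous injection $\widetilde\alpha\colon(0,1]\to X$, and by the fiber observation its image lies in a single $\{\uline{s}\}\times(t_{\uline{s}},\infty)$; then $\pi_2\circ\widetilde\alpha$ is a strictly monotonic continuous injection into $(t_{\uline{s}},\infty)$, so the limit $L:=\lim_{t\to 0^+}\pi_2\circ\widetilde\alpha(t)\in[t_{\uline{s}},\infty]$ exists. I would rule out $L>t_{\uline{s}}$. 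If $L\in(t_{\uline{s}},\infty)$, then $\widetilde\alpha(t)\to\langle\uline{s},L\rangle\in X$, so $z=H(\langle\uline{s},L\rangle)$; since $\langle\uline{s},L\rangle$ is an interior point of the ray in $X$, a short two-sided interval about $L$ in that fiber maps under $H\restriction X$ to an arc in $I(f_a)$ through $z$, contradicting that $z$ is not on a hair. If $L=\infty$, then $|\widetilde\alpha(t)|\to\infty$; invoking the continuous extension of the Cantor bouquet model $H$ to the one-point compactification of $\overline X$ (which sends $\infty\mapsto\infty$, as supplied by~\cite{rem2}) forces $\alpha(t)\to\infty$, contradicting $\alpha(0)=z\in\mathbb C$. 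Hence $L=t_{\uline{s}}<\infty$, and continuity of $H$ gives $z=H(\langle\uline{s},t_{\uline{s}}\rangle)$, as required.

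The main obstacle will be the exclusion of $L=\infty$, which relies on the (standard but non-trivial) fact that the Cantor bouquet model extends continuously at infinity; the remaining arguments are essentially mechanical consequences of~(1)--(3) and the fact that any connected subset of $\overline X$ is confined to a single fiber.
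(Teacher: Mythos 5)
Your proof is correct in substance, and your $\supset$ direction is essentially the paper's: the arc $t\mapsto H(\langle\uline{s},t_{\uline{s}}+t\rangle)$ witnesses the second clause, and the ``not on a hair'' clause follows by lifting any putative arc through $(H\restriction X)^{-1}$, confining it to a single fiber via total disconnectedness of $\mathbb Z^{\mathbb N}$, and noting that $t_{\uline{s}}$ cannot be an interior value of a monotone injection. Where you genuinely diverge is the inclusion $\subset$. The paper disposes of it in two lines by arguing contrapositively: since $H$ is surjective, any $z\in J(f_a)$ not of the form $H(\langle\uline{s},t_{\uline{s}}\rangle)$ equals $H(\langle\uline{s},t\rangle)$ with $t>t_{\uline{s}}$, and such a point is on a hair by (1) and (3) (a small two-sided interval about $t$ maps homeomorphically into $I(f_a)$), hence is not an endpoint. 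You instead start from an endpoint's witnessing arc, lift it, and analyze the limit $L$ of the second coordinate; this works, but the case $L=\infty$ forces you to import the behavior of $H$ at infinity (continuous extension to the one-point compactification), which is exactly the ``main obstacle'' you flag and which the paper never needs --- though you could also close that case using only the stated closedness of $H$, since $\{\uline{s}\}\times[r,\infty)$ is closed in $\overline X$, so its $H$-image is closed and must contain $z=\lim_{t\to 0^+}\alpha(t)$, putting $z\in I(f_a)$ and reducing to the case $L<\infty$. One small imprecision: the lift of $\alpha\restriction(0,1]$ lies in $(\{\uline{s}\}\times[t_{\uline{s}},\infty))\cap X$, not necessarily in the open ray, since (1) does not exclude escaping endpoints from $X$; your monotonicity argument survives this, but the statement as written is slightly too strong. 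Overall, both routes are valid; the paper's is more economical and stays entirely within facts (1)--(3).
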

 
 \begin{proof} If $t\in (t_{\uline{s}},\infty)$ then $H(\langle \uline{s}, t\rangle)$ is on a hair by items (1) and (3) above. Therefore $E(f_a)\subset\{H(\langle \uline{s},t_{\uline{s}}\rangle):\uline{s}\in \mathbb Z^{\mathbb N} \text{ and } t_{\uline{s}}<\infty\}$. We now prove the other inclusion. Let $z_0=H(\langle \uline{s},t_{\uline{s}}\rangle)$ be given. Since $\mathbb Z ^\mathbb N$ is totally separated,  every arc in $\overline{X}$ containing $\langle \uline{s},t_{\uline{s}}\rangle$ is contained in $ \{{\uline{s}}\}\times [t_{\uline{s}},\infty)$.  So by (2) and (3), every arc in $I(f_a)$ containing $z_0$ is contained in $H(\{{\uline{s}}\}\times [t_{\uline{s}},\infty))$. Since   $H\restriction \{{\uline{s}}\}\times [t_{\uline{s}},\infty)$ is a homeomorphism, it follows that $z_0$ is not on a hair.   On the other hand,  $\alpha(t)=H(\langle {\uline{s}},t_{\uline{s}}+t\rangle)$ defines an arc $\alpha:[0,1]\hookrightarrow J(f_a)$ with $\alpha(0)=z_0$ and $\alpha(t)\in I(f_a)$ for all $t>0$. Thus $z_0\in E(f_a)$. \end{proof}

\begin{ur}The endpoint set $E(f_a)$  is an example of a totally separated space which is  not zero-dimensional. This is a consequence of \cite[Theorem 1.7]{rem}, which states that $\infty$ is an explosion point for $E(f_a)$ (when $a\in F(f_a)$). This means that $E(f_a)$ is totally separated and $E(f_a)\cup \{\infty\}$ is connected.   The latter  implies that $E(f_a)$ is not zero-dimensional, since every clopen neighborhood in $E(f_a)$ must be unbounded.\end{ur}

\begin{figure}
\centering
\includegraphics[scale=.9]{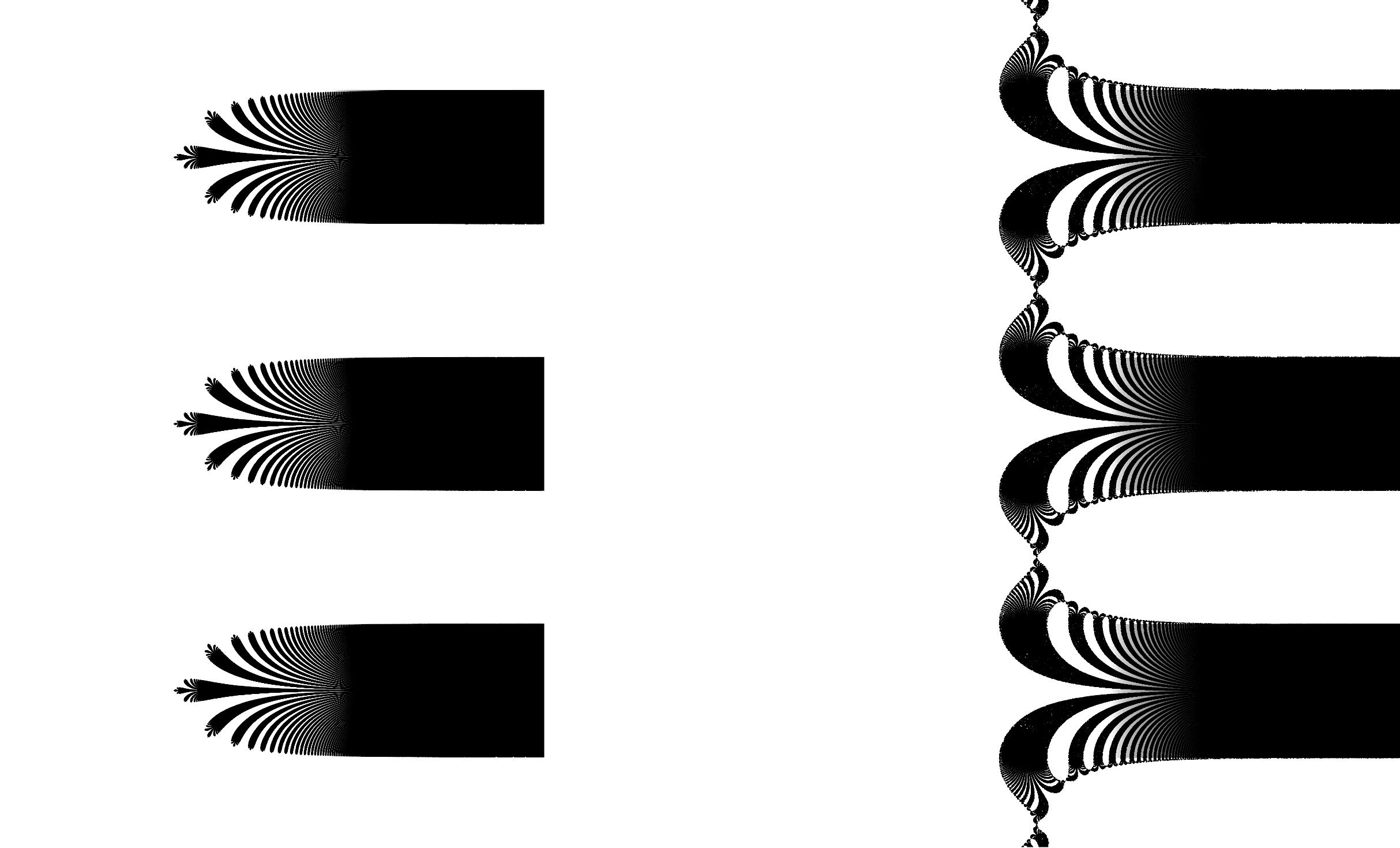}

\caption{Julia sets of $f_a$  for $a=-2$ (left) and $a=5+3.14 i$ (right).  }

\end{figure}

\begin{figure}

\centering
\includegraphics[scale=.9]{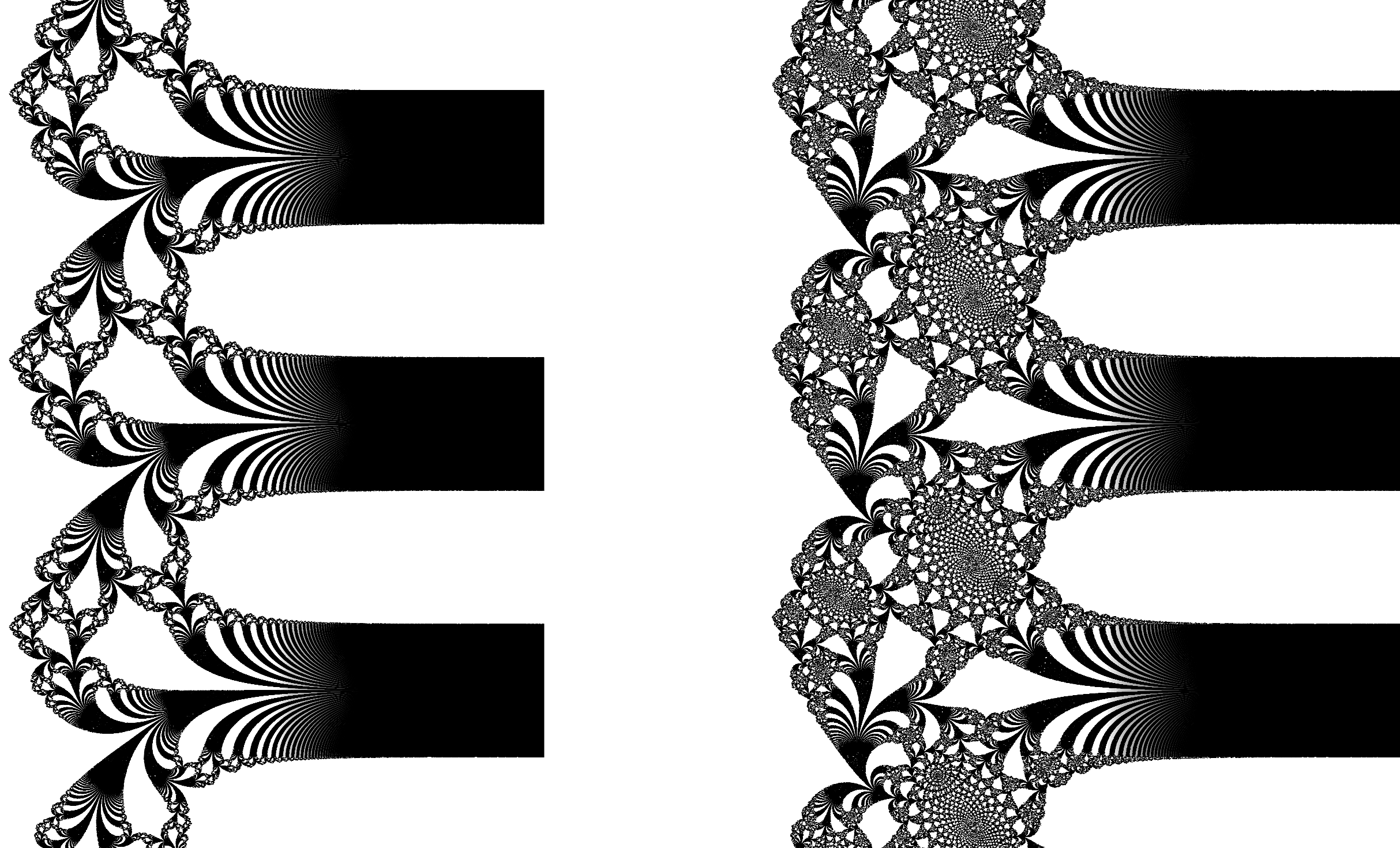}

\caption{Julia sets of $f_a$  for $a=2.06+1.57i$ (left) and $a=1.004+2.9i$ (right).  }
\end{figure}

   \subsection{Connected components of $J(f_a)$}  The following is a simple consequence of Observation 1  and the properties of $H$.
   
   \begin{up}\label{prop1}Each connected component of $J(f_a)$ contains a point of $E(f_a)$. \end{up}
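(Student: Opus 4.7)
The plan is to use the parametrization $H\colon\overline{X}\to J(f_a)$ together with the fact that the connected components of $\overline{X}$ are precisely the rays $\{\underline{s}\}\times[t_{\underline{s}},\infty)$. Since $\mathbb{Z}^{\mathbb{N}}$ is totally disconnected and each ray is an arc (or empty), the space $\overline{X}$ is the disjoint topological union of these rays, so the image under the continuous map $H$ of any single ray is a connected subset of $J(f_a)$.

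Concretely, let $C$ be a connected component of $J(f_a)$. Since $H$ is surjective, I would pick any $z\in C$ and choose a preimage $\langle\underline{s},t\rangle\in H^{-1}(z)\subset \overline{X}$. The existence of this preimage forces $t_{\underline{s}}<\infty$ (since $[\infty,\infty)=\varnothing$ by the convention in the excerpt, a fiber over $\underline{s}$ with $t_{\underline{s}}=\infty$ is empty). Then the ray $R_{\underline{s}}:=\{\underline{s}\}\times[t_{\underline{s}},\infty)$ is connected, contains $\langle\underline{s},t\rangle$, and so $H(R_{\underline{s}})$ is a connected subset of $J(f_a)$ containing $z$. By maximality of the component, $H(R_{\underline{s}})\subset C$.

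In particular, the ray-endpoint $\langle\underline{s},t_{\underline{s}}\rangle\in R_{\underline{s}}$ has $H(\langle\underline{s},t_{\underline{s}}\rangle)\in C$, and this image point lies in $E(f_a)$ by Observation~1. This produces the required endpoint inside $C$.

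There is essentially no obstacle; the statement is a direct structural corollary of the factorization through $\overline{X}$. The only point that needs care is the trivial bookkeeping that a point of $\overline{X}$ with first coordinate $\underline{s}$ can only exist when $t_{\underline{s}}<\infty$, which is exactly what places the image under the hypothesis of Observation~1 and certifies it as an endpoint rather than a point at infinity.
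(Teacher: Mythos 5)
Your proposal is correct and follows essentially the same route as the paper: use surjectivity of $H$ to find a preimage of a point of $C$, observe that the continuous image of the ray $\{\underline{s}\}\times[t_{\underline{s}},\infty)$ is connected and hence contained in $C$, and then apply Observation~1 to identify the image of the ray's endpoint as a point of $E(f_a)$. The additional remark that $t_{\underline{s}}<\infty$ whenever the fiber is nonempty is a harmless piece of bookkeeping that the paper leaves implicit.
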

   
\begin{proof}Let $C$ be a connected component of $J(f_a)$.  Since $H$ is surjective, there exists $\langle \uline{s},t \rangle\in H^{-1}(C)$. By continuity of $H$ the set  $H( \{{\uline{s}}\}\times [t_{\uline{s}},\infty))$ is connected, so we have   $H( \{{\uline{s}}\}\times [t_{\uline{s}},\infty))\subset C$.  Then $H(\langle \uline{s},t_{\uline{s}}\rangle)\in C\cap E(f_a)$ by Observation 1. 
\end{proof}
   
   The next proposition shows that every connected component of $J(f_a)$ can be written as an intersection of relatively clopen subsets of $J(f_a)$.
   
\begin{up}\label{prop2}For every connected component $C$ of $J(f_a)$ there is a sequence of open subsets $O_1\supset O_2\supset \ldots$ of $\mathbb C$ such that   $\partial O_n\subset F(f_a)$ and  $$\bigcap_{n\in \mathbb N}\overline{O_n}\cap J(f_a)=C.$$

\end{up}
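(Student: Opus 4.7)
The plan is to isolate $C$ using the symbolic dynamics of $f_a$ on the horizontal strips $T_k=\{z\in\mathbb{C}:(2k-1)\pi<\imag z<(2k+1)\pi\}$, $k\in\mathbb{Z}$, which furnish a standard coding for $f_a$ and underlie the model $\overline X$. By Proposition 1 and Observation 1, pick $\underline{s}\in\mathbb{Z}^{\mathbb{N}}$ with $H(\langle\underline{s},t_{\underline{s}}\rangle)\in C$, and let $\Sigma(C)\subseteq\mathbb{Z}^{\mathbb{N}}$ be the (non-empty) set of all addresses $\underline{s}'$ such that $H(\{\underline{s}'\}\times[t_{\underline{s}'},\infty))\subseteq C$; using continuity of $H$ and closedness of $C$ in $J(f_a)$, one checks that $\Sigma(C)$ is closed in $\mathbb{Z}^{\mathbb{N}}$. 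I define
\[
O_n=\bigcup_{\underline{s}'\in\Sigma(C)}\bigcap_{k=0}^{n-1}f_a^{-k}(T_{s_k'}).
\]
Each $O_n$ is open in $\mathbb{C}$, the sequence $\{O_n\}$ is decreasing, and $C\subseteq O_n$, because every $z\in C$ lies on a ray whose address in $\Sigma(C)$ gives the itinerary of $z$ through the strips.

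To see $\partial O_n\subseteq F(f_a)$, observe that any boundary point of $O_n$ has some iterate on a strip boundary $\partial T_\ell=\{\imag z=(2\ell+1)\pi\}$. A direct computation yields $f_a(\partial T_\ell)=\{a-t:t>0\}=:R$, the horizontal ray emanating leftward from $a$. I would verify $R\subseteq F(f_a)$ as follows: since $a\in F(f_a)$, pick $\varepsilon>0$ with $\{w:|w-a|<\varepsilon\}\subseteq F(f_a)$; then $f_a^{-1}$ of this disc is the left half-plane $\{\eal z<\log\varepsilon\}$, which is therefore contained in $F(f_a)$. For $w=a-t\in R$ one computes $f_a(w)=a+e^a e^{-t}$, which lies in the disc $\{|w-a|<\varepsilon\}$ once $t>\eal(a)-\log\varepsilon$; for small $t$, $w$ itself already lies in that disc; and the intermediate range of $t$ is handled by iterating $f_a$ further until the orbit enters the left half-plane. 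By total invariance of $F(f_a)$ we obtain $\partial T_\ell\subseteq F(f_a)$, hence $\partial O_n\subseteq F(f_a)$.

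For the intersection condition, take $z\in J(f_a)\setminus C$. Because $\partial T_\ell\subseteq F(f_a)$, the point $z$ avoids every strip boundary and so has a well-defined symbolic address $\underline{r}\in\mathbb{Z}^{\mathbb{N}}$; the pinched Cantor bouquet structure forces $\underline{r}\notin\Sigma(C)$, and closedness of $\Sigma(C)$ yields an $n$ for which no element of $\Sigma(C)$ has initial segment $(r_0,\ldots,r_{n-1})$. Then $\bigcap_{k=0}^{n-1}f_a^{-k}(T_{r_k})$ is an open neighborhood of $z$ disjoint from $O_n$, so $z\notin\overline{O_n}$, as required.

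The main obstacle will be verifying $R\subseteq F(f_a)$ in the intermediate $t$-range, where $w=a-t$ may fail to land in the guaranteed left half-plane after one or two iterates of $f_a$; one must argue that the orbit nevertheless escapes there. If this direct argument breaks down for some parameters $a\in F(f_a)$, the fallback is to replace each horizontal line $\partial T_\ell$ by a nearby Jordan curve in $F(f_a)$ still separating $T_\ell$ from $T_{\ell+1}$; openness of $F(f_a)$ and nowhere-density of $J(f_a)$ should provide enough flexibility to carry out such a perturbation.
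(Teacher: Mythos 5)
There is a genuine gap here --- in fact two. First, the boundaries of your static partition are not contained in the Fatou set. You correctly reduce this to showing that the leftward ray $R=\{a-t:t>0\}$ lies in $F(f_a)$, and you correctly handle small $t$ (points near $a$) and large $t$ (the far left half-plane maps into a small disc about $a$), but the intermediate range is not a technicality: for general $a\in F(f_a)$ the bounded horizontal segment between these two regimes genuinely meets $J(f_a)$, and there is no reason the orbit of such a point ever enters the left half-plane. The proposed fallback of ``perturbing'' each line $\partial T_\ell$ into $F(f_a)$ using nowhere-density of $J(f_a)$ also does not work: a nowhere dense set can still separate two regions, so separating curves inside $F(f_a)$ must be produced dynamically --- as in Proposition \ref{prop4}, where they arise as components of $f_a^{-1}(D\cup\sigma)$ for an arc $\sigma\subset F(f_a)$ tending to $+\infty$ --- not by perturbation.

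Second, and more seriously, the step ``the pinched Cantor bouquet structure forces $\underline{r}\notin\Sigma(C)$'' is exactly the hard point, and you have not proved it. What is needed is that any two points lying in \emph{different} components of $J(f_a)$ have itineraries that differ in some finite coordinate; equivalently, that distinct endpoints have distinct itineraries with respect to the chosen partition. This is false for an arbitrary partition and is a substantive theorem for a well-chosen one: the paper invokes \cite[Proposition 6.15]{rem} to obtain a closed set $\gamma\subset F(f_a)$ with precisely this separation property, and then simply takes $O_n$ to be the connected component of $z_0\in C\cap E(f_a)$ in $\mathbb C\setminus\bigcup_{k=1}^{n}f_a^{-k}(\gamma)$, using Proposition \ref{prop1} to conclude that $\bigcap_n O_n\cap J(f_a)$, being a union of components each of which contains an endpoint, must equal $C$. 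Two further unjustified points in your write-up: the identification of a point's itinerary through the static strips with its address $\underline{s}'$ in the model $\overline X$ (the model's addresses come from a dynamic partition and need not agree with static itineraries), and the closedness of $\Sigma(C)$ in $\mathbb Z^{\mathbb N}$, which requires an argument since $t_{\underline{s}}$ is not continuous in $\underline{s}$ and a limit of addresses may have $t_{\underline{s}}=\infty$. The citation of \cite[Proposition 6.15]{rem}, or an equivalent rigidity statement, is the essential missing ingredient.
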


\begin{proof} 
By  \cite[Proposition 6.15]{rem} there is a closed set $\gamma\subset F(f_a)$ such that every two endpoints $z_0,z_1\in E(f_a)$ have different itineraries with respect to $f_a^{-1}(\gamma)$. This means that there exists $n\geq 0$ such that $f_a^n(z_0)$ and $f_a^n(z_1)$ belong to different connected components of $\mathbb C\setminus f_a^{-1}(\gamma)$; see   \cite[Definition 6.1]{rem}. For such $n$, note that  $z_0$ and $z_1$ belong to different connected components of $\mathbb C\setminus f_a^{-1-n}(\gamma)$. 

Now let $C$ be a connected component of $J(f_a)$. Let $z_0\in C\cap E(f_a)$ be given by Proposition \ref{prop1}.   For each $n\in \mathbb N$ let  $O_n$ be  the connected component of $z_0$ in $\mathbb C\setminus \bigcup_{k=1}^n f_a^{-k}(\gamma)$. Clearly   $O_n$ is open, $O_{n}\supset O_{n+1}$, and $\partial O_n\subset  \bigcup_{k=1}^n f_a^{-k}(\gamma)\subset F(f_a)$ for every $n\in \mathbb N$. Also,  $\bigcap_{n\in \mathbb N} O_n\cap E(f_a)=\{z_0\}$ by the previous paragraph.   Since $O_n\cap  J(f_a)$ is clopen in $J(f_a)$, we have that  $\bigcap_{n\in \mathbb N}O_n\cap J(f_a)$ is a union of connected components of $J(f_a)$.  By Proposition \ref{prop1} we therefore have $\bigcap_{n\in \mathbb N}O_n\cap J(f_a)=C.$ Since $O_n\cap J(f_a)=\overline{O_n}\cap J(f_a)$, the proof is complete.
\end{proof}

The    proof above shows that every two elements of $E(f_a)$ belong to different connected components of $J(f_a)$.  Thus Proposition \ref{prop1} can be improved to:

   \begin{up}\label{prop3}Each connected component of $J(f_a)$ contains exactly one endpoint.
   \end{up}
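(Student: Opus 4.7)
The plan is to derive Proposition 3 by combining Proposition \ref{prop1} with the uniqueness observation extracted from the proof of Proposition \ref{prop2}. Proposition \ref{prop1} already gives the existence part: every component $C$ of $J(f_a)$ contains at least one point of $E(f_a)$. So the only thing left to establish is that two distinct endpoints always lie in different components of $J(f_a)$.

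For the uniqueness part, I would fix distinct endpoints $z_0, z_1 \in E(f_a)$ and invoke the separating set $\gamma\subset F(f_a)$ from \cite[Proposition 6.15]{rem}, which ensures that the itineraries of $z_0$ and $z_1$ with respect to $f_a^{-1}(\gamma)$ differ. Unwinding the itinerary definition \cite[Definition 6.1]{rem}, one obtains $n\geq 0$ such that $f_a^n(z_0)$ and $f_a^n(z_1)$ land in different components of $\mathbb C\setminus f_a^{-1}(\gamma)$, and pulling this back under $f_a^n$ yields that $z_0$ and $z_1$ belong to different components of $\mathbb C\setminus f_a^{-1-n}(\gamma)$. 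Call these components $U_0$ and $U_1$ respectively.

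Since $\gamma\subset F(f_a)$ and $F(f_a)$ is completely invariant, $f_a^{-1-n}(\gamma)\subset F(f_a)$, so $J(f_a)\subset\mathbb C\setminus f_a^{-1-n}(\gamma)$. In particular, $U_0\cap J(f_a)$ is a clopen subset of $J(f_a)$ containing $z_0$ but not $z_1$, which prevents $z_0$ and $z_1$ from lying in the same component of $J(f_a)$. Together with Proposition \ref{prop1}, this proves the statement.

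I do not anticipate any real obstacle here, since essentially the entire argument is already embedded in the proof of Proposition \ref{prop2}; the only task is to isolate the component-separation step and present it cleanly as its own consequence of \cite[Proposition 6.15]{rem}.
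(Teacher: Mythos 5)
Your proof is correct and takes essentially the same route as the paper, which derives Proposition \ref{prop3} precisely by noting that the itinerary argument in the proof of Proposition \ref{prop2} places any two distinct endpoints in different connected components of $J(f_a)$. Your explicit remark that $U_0\cap J(f_a)$ is clopen in $J(f_a)$ (since $\partial U_0\subset f_a^{-1-n}(\gamma)\subset F(f_a)$) is exactly the right way to make the final separation step precise.
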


The final proposition implies, in particular,   that each connected component of $J(f_a)$ is bounded in the imaginary direction.

\begin{up}\label{prop4}There exist $c,\delta>0$ and a closed set $\mathcal M\subset F(f_a)$ such that:
\renewcommand{\theenumi}{\roman{enumi}}
\begin{enumerate}
\item $\{z:\eal(z)\leq -c\}\subset \mathcal M\text{; and}$
\item $\sup \{|\imag (z)-\imag (z')|:z,z'\in O\}\leq \delta$  for every connected \\component $O$ of $\mathbb C\setminus \mathcal M. $
\end{enumerate}\end{up}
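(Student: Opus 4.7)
The plan is to build $\mathcal{M}$ as the union of a closed left half-plane, which automatically lies in $F(f_a)$, and a countable family of pairwise disjoint Jordan arcs $\gamma_k\subset F(f_a)$ ($k \in \mathbb{Z}$), each running from $-\infty$ to $+\infty$ at approximate height $(2k+1)\pi$. These arcs will be obtained as preimage branches of a single arc $\gamma$ drawn in the Fatou component of $a$.

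I would first fix an open disc $D\subset F(f_a)$ centered at $a$ of some radius $r>0$. Because $f_a$ is a universal covering of $\mathbb{C}\setminus\{a\}$ (given by $z\mapsto a+e^z$), the full preimage $f_a^{-1}(D)$ is exactly the left half-plane $\{z:|e^z|<r\}=\{\eal(z)<\log r\}$, which therefore lies in $F(f_a)$. Any $c>-\log r$ then satisfies condition (i). Next, I would select an injective continuous arc $\gamma\colon[0,1)\to F(f_a)$ with $\gamma(0)=a$, $\gamma(t)\to\infty$ as $t\to 1$, and $\arg(\gamma(t)-a)\to\pi$ along the approach, so $\gamma$ tends to $-\infty$ horizontally. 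The preimage $f_a^{-1}(\gamma)$ then splits as a disjoint family
$$\gamma_k(t)=\log|\gamma(t)-a|+i\bigl(\arg(\gamma(t)-a)+2k\pi\bigr),\qquad k\in\mathbb{Z},$$
with $\gamma_k$ trapped in the strip $\{(2k-1)\pi<\imag(z)\leq(2k+1)\pi\}$. Each $\gamma_k$ is a Jordan arc in $F(f_a)$ satisfying $\eal(\gamma_k(t))\to-\infty$ as $t\to 0$ and $\eal(\gamma_k(t))\to+\infty$, $\imag(\gamma_k(t))\to(2k+1)\pi$ as $t\to 1$, and thus separates $\mathbb{C}$ into two halves. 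Setting $\mathcal{M}=\{\eal(z)\leq-c\}\cup\bigcup_{k\in\mathbb{Z}}\gamma_k$ produces a closed subset of $F(f_a)$ satisfying (i). Every connected component of $\mathbb{C}\setminus\mathcal{M}$ is caught between two consecutive arcs $\gamma_k,\gamma_{k+1}$ and to the right of the half-plane, so its imaginary extent is at most $4\pi$; one can take $\delta=4\pi$.

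The main obstacle is the construction of the arc $\gamma$, which requires showing that $a$ and some deep left half-plane belong to a common Fatou component $U$. For attracting $a$ this follows because $f_a$ carries any sufficiently deep left half-plane into the small disc $D\subset U$, together with a short argument using forward invariance of $U$; for parabolic $a$ the analogous statement holds via the attracting petal at the parabolic fixed point. This dynamical input is what distinguishes the proposition from a purely topological separation statement about $\mathbb{C}$.
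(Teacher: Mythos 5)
Your construction is structurally the same as the paper's: pull back under $f_a$ the union of a small disc $D$ about $a$ and an arc in the Fatou set running from $D$ to $\infty$; the disc pulls back to a left half-plane, the arc to a $2\pi i\mathbb Z$-periodic family of arcs each confined to a horizontal strip, and the complementary components are then bounded in the imaginary direction. The genuine gap is exactly at the point you flag as ``the main obstacle,'' and your proposed resolution of it is false in general. Since your arc $\gamma$ is connected and contains $a$, it lies in the Fatou component $U$ of $a$; since $\arg(\gamma(t)-a)\to\pi$ and $|\gamma(t)-a|\to\infty$ force $\eal(\gamma(t))\to-\infty$, the component $U$ would have to meet, and hence contain, a deep left half-plane $H=\{z:\eal(z)\le -c'\}$. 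You justify this by ``forward invariance of $U$,'' but $U$ is forward invariant only when the cycle yields an invariant component (an attracting fixed point, or a parabolic point with a single petal). If the attracting or parabolic cycle of Fatou components has period $p\ge 2$, say $U=U_0$ with $f_a(U_0)\subset U_1$ and $U_0\cap U_1=\varnothing$, then the component $V$ containing $H$ satisfies $f_a(V)\subset D\subset U_0$ (because $f_a(H)\subset D$), so $V=U_0$ would give $f_a(U_0)\subset U_0$, a contradiction. Hence $H\not\subset U$, no arc in $F(f_a)$ from $a$ to $\infty$ with real part tending to $-\infty$ exists, and your $\gamma$ does not exist. Such parameters occur (there are hyperbolic components of every period in the exponential family), and the proposition is needed for all $a\in F(f_a)$.

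The paper sidesteps this by sending the arc in the opposite direction: it invokes \cite[Proposition 2.1]{vas} for an arc $\sigma\subset U$ meeting $D$ in a single boundary point, along which the real part tends to infinity. That existence statement is the genuinely dynamical input, and it holds for every attracting and parabolic parameter regardless of the period. With $\sigma$ in hand, $\mathcal M=f_a^{-1}(D\cup\sigma)$ is precisely your picture (a left half-plane with $2\pi i$-translated arcs attached along its boundary line and running to the right), and the rest of your argument goes through. A separate, minor point: to trap each preimage arc in a strip of width $2\pi$ (and so get $\delta=4\pi$) you need a continuous branch of $\arg(\gamma(t)-a)$ that is bounded, not merely convergent to $\pi$ at one end; the paper handles this by only asserting that each arc has bounded imaginary variation $\varepsilon$ and taking $\delta=\varepsilon+2\pi$.
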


\begin{proof}The proof is contained in the first part of the proof of \cite[Theorem 3.1]{vas}. We include it here for completeness. Let $U$ be the component of $F(f_a)$ containing $a$. Let $c>0$ be large enough so that the closed disk $D:=\{z\in \mathbb C:|z-a|\leq e^{-c}\}$ is contained in $U$.  By \cite[Proposition 2.1]{vas} there is an arc $\sigma\subset U$ along which the real part tends to infinity, and which intersects $D$ at a single point.    Then $$\mathcal M:=f_a^{-1}(D\cup \sigma)\subset F(f_a)$$ is a closed set consisting of the left half-plane $\{z : \eal(z) \leq -c\}$ and  countably many arcs connecting this half-plane to $\infty$. The arcs are all $2\pi i \mathbb Z$-translates of each other, and each arc is bounded in the imaginary direction. Thus there exists $\varepsilon>0$ such that the imaginary part of each arc varies by less than $\varepsilon$. Each connected component $O$ of $\mathbb C\setminus \mathcal M$ is an open region between consecutive arcs that are $2\pi i$ apart, so $\delta=\varepsilon+2\pi$ is as desired. \end{proof}

\section{Key lemmas}

Assume $a\in F(f_a)$. Let $R>0$ be large enough so that $M^n(R)=M^n(R,f_a) \to \infty$ and $M(r)>r$ for all $r\geq R$. For example, based on the calculations in \cite[Lemma 2.5]{vas} we may take $R=3+2|a|$.

The first lemma (below) concerns the set  $$A_R(f_a):= \{z \in \mathbb C : |f_a^n(z)| \geq M^n(R)\text{ for all }n \geq0\},$$ and was mostly proved in \cite{vas}.

\begin{lemma}\label{lem1}There exists $\lambda>0$ such that for every $z_0\in J(f_a)$  there is an open set $V\subset \mathbb C$ such that $z_0\in V$,  $\partial V\cap J(f_a)\subset A_R(f_a)$, and $\sup\{\eal(z):z\in V\}\leq |z_0|+\lambda$.\end{lemma}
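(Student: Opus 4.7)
The plan is to take $V$ to be a right-truncation of the connected component of $\mathbb C \setminus \mathcal M$ containing $z_0$, where $\mathcal M \subset F(f_a)$ is the closed set furnished by Proposition \ref{prop4}. Fix $\lambda>0$ to be a large constant, chosen uniformly and independently of $z_0$ (its precise value will come from the last step). Given $z_0 \in J(f_a)$, since $\mathcal M \subset F(f_a)$ we have $z_0 \notin \mathcal M$; let $O$ denote the connected component of $\mathbb C \setminus \mathcal M$ that contains $z_0$ and set
$$V := O \cap \{z \in \mathbb C : \eal(z) < |z_0| + \lambda\}.$$
Then $z_0 \in V$ since $\eal(z_0) \leq |z_0|$, and $\sup\{\eal(z) : z \in V\} \leq |z_0| + \lambda$, so two of the three conclusions hold by construction.

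For the boundary condition, observe that
$$\partial V \subset \partial O \cup L, \qquad L := \overline O \cap \{z : \eal(z) = |z_0| + \lambda\}.$$
Because $\partial O \subset \mathcal M \subset F(f_a)$, we have $\partial O \cap J(f_a) = \varnothing$, so the task reduces to proving $L \cap J(f_a) \subset A_R(f_a)$ for a suitable uniform choice of $\lambda$. For $z \in L$, Proposition \ref{prop4}(ii) yields $|\imag(z) - \imag(z_0)| \leq \delta$, which combined with $\eal(z) = |z_0| + \lambda$ gives explicit bounds on $|z|$; the $n=0$ case of membership in $A_R(f_a)$, namely $|z| \geq R$, holds as soon as $\lambda \geq R$.

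The main obstacle is the iteration requirement $|f_a^n(z)| \geq M^n(R)$ for every $n \geq 1$. This does not follow from a bound on $\eal(z)$ alone, since $\eal(f_a(z)) = e^{\eal(z)} \cos(\imag(z)) + \eal(a)$ can be very large and negative when $\cos(\imag(z)) < 0$, driving $|f_a^2(z)|$ close to $|a|$ and breaking the cascade. What rescues the argument is that $z \in \overline O$ with $\mathcal M$ built as $f_a^{-1}(D \cup \sigma)$: this restricts how the orbit of $z$ may cross between consecutive strips of $\mathbb C \setminus \mathcal M$, forcing enough control on the imaginary parts of $f_a^n(z)$ for the estimate $|f_a(w)| \geq e^{\eal(w)} - |a|$ to iterate. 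The precise calculation is essentially \cite[Lemma 2.5]{vas}, which is why the author describes the present lemma as ``mostly proved in \cite{vas}''; I would import that estimate directly to fix $\lambda$ uniformly and close the argument.
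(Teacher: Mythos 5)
There is a genuine gap, and it sits exactly where you flag ``the main obstacle.'' Your set $V=O\cap\{\eal(z)<|z_0|+\lambda\}$ forces the real-part bound by fiat, but then the entire burden falls on showing that every Julia point on the vertical cut $L=\overline O\cap\{\eal(z)=|z_0|+\lambda\}$ lies in $A_R(f_a)$, and this is not true and cannot be rescued by the structure of $\mathcal M$. The set $\mathcal M=f_a^{-1}(D\cup\sigma)$ constrains only \emph{one} step of the orbit (membership in $\overline O$ tells you roughly where $f_a(z)$ does not land); it gives no control over $f_a^n(z)$ for $n\geq 2$, so the cascade $|f_a^{n+1}(z)|\geq e^{\eal(f_a^n(z))}-|a|\geq M^{n+1}(R)$ cannot be iterated. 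Concretely, $J(f_a)\setminus A(f_a)=J_{\mathrm{m}}(f_a)$ is dense in $J(f_a)$ and consists (for these parameters) of endpoints and pinched points, which occur with arbitrarily large real part inside the strips $O$; a non-fast-escaping endpoint lying on your cut line would be a point of $\partial V\cap J(f_a)$ outside $A_R(f_a)$. Since $\lambda$ must be uniform and $|z_0|+\lambda$ sweeps out an entire right half-line as $z_0$ ranges over $J(f_a)$, there is no choice of $\lambda$ that dodges all such points. Citing \cite[Lemma 2.5]{vas} does not close this: that lemma only controls the growth of $M(r)$, not the location of meandering points relative to a vertical line.

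The paper's construction avoids the problem by making the boundary condition true \emph{by definition} and putting the work into the real-part bound instead --- the opposite trade-off from yours. It sets $R'=|z_0|+\lambda-3$, $X_k=\{z:|f_a^k(z)|\geq M^k(R')\}\cup\bigcup_{j=0}^{k-1}f_a^{-j}(\mathcal M)$ and $X=\bigcap_{k\geq 0}X_k$, and takes $V$ to be the connected component of $z_0$ in $\mathbb C\setminus X$. Then $\partial V\subset X\subset A_{R'}(f_a)\cup F(f_a)$ immediately gives $\partial V\cap J(f_a)\subset A_{R'}(f_a)\subset A_R(f_a)$; the nontrivial step, imported from the proof of \cite[Theorem 3.1]{vas} using Proposition \ref{prop4}, is that this component $V$ cannot extend past $\eal(z)=R'+3=|z_0|+\lambda$. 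If you want to repair your argument, you should replace the vertical cut by this set $X$; as written, the proposal does not prove the boundary inclusion.
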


\begin{proof}Let $c,\delta>0$ and $\mathcal M\subset F(f_a)$ be given by Proposition \ref{prop4}.  Let $$\lambda=\max\{R,c,\ln(1+2(|a|+\delta)),  \ln(5+|a|)\}+6,$$ and let $z_0\in J(f_a)$ be given.  Let $R'=|z_0|+\lambda-3$.       To construct $V$ and verify its properties, we now follow the proof of \cite[Theorem 3.1]{vas} with $R'$ in the place of $R$. 

For each $k\geq 0$ let $X_k=\{z\in \mathbb C:|f^k_a(z)|\geq M^k(R')\}\cup \textstyle\bigcup_{j=0}^{k-1} f^{-j}_a(\mathcal M).$ Put $X=\bigcap_{k\geq 0} X_k.$ Observe that $X\subset A_{R'}(f_a)\cup F(f_a)$. We know $z_0\notin F(f_a)$, and  $z_0\notin A_{R'}(f_a)$  because $|z_0|<R'=M^0(R')$. Hence $z_0\notin X$. Let $V$ be the connected component of $z_0$ in $\mathbb C\setminus X$.  Since  $\partial V\subset X$ and $R'>R$, we get $$\partial V\cap J(f_a)\subset A_{R'}(f_a)\subset A_{R}(f_a).$$   

Using   (i) and (ii) from Proposition \ref{prop4}, and the fact $$R'>\max\{|z_0|,c,3,\ln(1+2(|a|+\delta))\},$$ the proof of \cite[Theorem 3.1]{vas} proceeds to show that   $\sup\{\eal(z):z\in V\}\leq K$ for a particular constant $K$  that depends on $R'$. Tracing the definition of $K$ back to  \cite[Corollary 2.7]{vas},  we find that $K=\max\{2+\ln(5+|a|),R'+3\}=R'+3=|z_0|+\lambda$.
  \end{proof}

We will also need the next lemma which compares the orbits of fast escaping points and meandering points.

\begin{lemma}\label{lem2} Let $s\in A(f_a)$ and $z_0\in \mathbb C\setminus A(f_a)$.  Then  for every $\kappa>0$ there exists $n\in \mathbb N$ such that  $|f_a^{n}(s)|>2|f^n_a(z_0)|+\kappa$ and $\eal(f^{n}_a(s))>0$.  
\end{lemma}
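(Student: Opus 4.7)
The plan is to combine a lower bound on the orbit of $s$ coming from $s\in A(f_a)$ with an upper bound on the orbit of $z_0$ coming from $z_0\notin A(f_a)$, and to exploit the super-exponential growth of $j\mapsto M^j(R)$ to amplify the separation.

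First I would fix $\ell\geq 0$ so that $|f_a^n(s)|\geq M^{n-\ell}(R)$ for every $n\geq \ell$; this is just a reformulation of $s\in A(f_a)$. The heart of the argument is to show that the set
\[
\mathcal{N}:=\{n\geq \ell+1:|f_a^n(z_0)|<M^{n-\ell-1}(R)\}
\]
is infinite. Otherwise there would exist $N'\geq \ell+1$ with $|f_a^n(z_0)|\geq M^{n-\ell-1}(R)$ for all $n\geq N'$. Since $M^j(R)$ is increasing in $j$ (because $M(r)>r$ for $r\geq R$), substituting $n=N'+j$ would give $|f_a^{N'+j}(z_0)|\geq M^{N'+j-\ell-1}(R)\geq M^j(R)$ for every $j\geq 0$, putting $z_0\in A(f_a)$ and contradicting the hypothesis.

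Given $\kappa>0$, I then pick $n\in\mathcal{N}$ large. Writing $r_n:=M^{n-\ell-1}(R)$, the defining inequalities give $|f_a^n(z_0)|<r_n$ and $|f_a^n(s)|\geq M^{n-\ell}(R)=M(r_n)$. Evaluating at the real point $z=r$ yields the bound $M(r)\geq e^r-|a|$, so $M(r)/r\to\infty$ and hence $M(r)>2r+\kappa$ once $r$ exceeds a threshold depending only on $|a|$ and $\kappa$. Because $r_n\to\infty$ as $n\to\infty$ inside $\mathcal{N}$, for all sufficiently large $n\in\mathcal{N}$ we obtain
\[
|f_a^n(s)|\geq M(r_n)>2r_n+\kappa>2|f_a^n(z_0)|+\kappa.
\]
For the real-part condition I would apply $|f_a^{n+1}(s)|\leq e^{\eal(f_a^n(s))}+|a|$, which rearranges to $\eal(f_a^n(s))\geq\ln(|f_a^{n+1}(s)|-|a|)\geq\ln(M^{n+1-\ell}(R)-|a|)$, a quantity that becomes positive for all sufficiently large $n$.

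The only nontrivial step is the unboundedness of $\mathcal{N}$; everything else is routine estimation using the growth rate of $M^j(R)$ and the identity $|e^z|=e^{\eal(z)}$.
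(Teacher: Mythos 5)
Your proof is correct and follows essentially the same route as the paper: both use the negation of $z_0\in A(f_a)$ to find an index $n$ at which the orbit of $s$ is at least one $M$-iterate ahead of the orbit of $z_0$, then exploit the exponential growth of $M$ to turn that gap into $M(r)>2r+\kappa$, and both derive $\eal(f_a^n(s))>0$ from the bound $|f_a^{n+1}(s)|\leq e^{\eal(f_a^n(s))}+|a|$ together with $|f_a^{n+1}(s)|>|a|+1$. The only difference is organizational: you show the set of good indices is infinite and pick a large one, whereas the paper constructs a single explicit $n=\ell+m+N+1$.
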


\begin{proof}Let $\kappa>0$.  Let $\ell\geq 0$ be such that $f_a^{\ell+n}(s)\geq M^n(R)$ for all $n\in \mathbb N$.

By  \cite[Lemma 2.5]{vas},   $M(r)-r\geq e^{r-1}-r> r+\kappa$ when $r$ is sufficiently large.     Since $M^n(R)\to\infty$, we may substitute   $r=M^n(R)$ to see that  there exists $m\in \mathbb N$ such that  $M^{n+1}(R)-2M^n(R)> \kappa$ for all $n\geq m$.  By increasing $m$, we may assume $|f_a^{n}(s)|>|a|+1$ for all $n\geq m$.   
Since $z_0\notin A(f_a)$, there exists $N\in \mathbb N$ such that $$|f^{\ell+m+1+N}_a(z_0)|< M^{N}(R).$$ Let $n=\ell+m+N+1$. Then $$2 |f_a^{n}(z_0)|<  2M^{N}(R)\leq 2M^{m+N}(R)< M^{m+N+1}(R)-\kappa\leq  |f_a^{n}(s)|-\kappa.$$ Therefore $|f_a^{n}(s)|>2|f^{n}_a(z_0)|+\kappa$.  Also,  $|f_a^{n+1}(s)|>|a|+1$ implies $\eal(f^{n}_a(s))>0$ since $f_a$ maps each point with negative real part into the unit disc around $a$.\end{proof}

\section{Main results}

We will now  combine Propositions \ref{prop2} through \ref{prop4} with Lemmas 6 and 7 to compute the topological dimension of $J_{\mathrm{m}}(f_a)$.

\begin{ut} \label{main}If $a\in F(f_a)$ then $J_{\mathrm{m}}(f_a)$ is zero-dimensional. \end{ut}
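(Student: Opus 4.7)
The plan is to show that every $z_0 \in J_{\mathrm{m}}(f_a)$ admits arbitrarily small clopen neighborhoods in $J_{\mathrm{m}}(f_a)$. Fix such a $z_0$ and an open $U \subset \mathbb{C}$ with $z_0 \in U$; I aim to construct an open $W \subset \mathbb{C}$ satisfying $z_0 \in W$, $\partial W \subset F(f_a) \cup A(f_a)$, and $W \cap J_{\mathrm{m}}(f_a) \subset U$. Since $\partial W$ avoids $J_{\mathrm{m}}(f_a) = J(f_a) \setminus A(f_a)$, the set $W \cap J_{\mathrm{m}}(f_a)$ will then be clopen in $J_{\mathrm{m}}(f_a)$ and contained in $U$.

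The first step builds a bounded enclosure. Lemma \ref{lem1} applied to $z_0$ gives an open $V \ni z_0$ with $\partial V \cap J(f_a) \subset A(f_a)$ and $\sup\{\eal(z) : z \in V\} \leq |z_0| + \lambda$. With $\mathcal{M}$, $c$, $\delta$ from Proposition \ref{prop4} and $O_\star$ the connected component of $z_0$ in $\mathbb{C} \setminus \mathcal{M}$, the set $O_\star$ lies in $\{\eal > -c\}$ and has imaginary variation at most $\delta$. Hence $V \cap O_\star$ is a bounded open neighborhood of $z_0$ with $\partial(V \cap O_\star) \subset F(f_a) \cup A(f_a)$. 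Proposition \ref{prop2}, applied to the component $C$ of $z_0$ in $J(f_a)$, yields decreasing open sets $O_1 \supset O_2 \supset \cdots$ with $\partial O_n \subset F(f_a)$ and $\bigcap_n \overline{O_n} \cap J(f_a) = C$. Setting $W_n := V \cap O_\star \cap O_n$ gives a decreasing family of bounded open sets with $\partial W_n \subset F(f_a) \cup A(f_a)$, and hence a decreasing family of clopen neighborhoods $K_n := W_n \cap J_{\mathrm{m}}(f_a)$ of $z_0$ in $J_{\mathrm{m}}(f_a)$.

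By compactness of $\overline{V \cap O_\star}$, showing $K_n \subset U$ for some $n$ reduces to showing $\overline{V \cap O_\star} \cap C \subset U$: any selector $y_n \in K_n \setminus U$ would yield, by passing to a subsequence, a limit $y \in \overline{V \cap O_\star} \cap \bigcap_n \overline{O_n} \cap J(f_a) = \overline{V \cap O_\star} \cap C$ with $y \notin U$, contradicting the desired inclusion. Since $V \cap O_\star$ is not in general small, I would shrink it by iterating Lemma \ref{lem1}: apply Lemma \ref{lem1} to $f_a^n(z_0) \in J_{\mathrm{m}}(f_a)$ to obtain $V^{(n)}$, and form $\widetilde{V}_n := \text{Comp}(z_0, f_a^{-n}(V^{(n)}))$. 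Complete invariance of $F(f_a)$ and $A(f_a)$ keeps $\partial \widetilde{V}_n \subset F(f_a) \cup A(f_a)$, so $\widetilde{V}_n$ may replace $V$ throughout the construction. Lemma \ref{lem2} then controls the intersection with $A(f_a)$: for each $s \in A(f_a)$, large enough $n$ gives $|f_a^n(s)| > |f_a^n(z_0)| + \lambda$ and $\eal(f_a^n(s)) > 0$, putting $f_a^n(s)$ outside $V^{(n)}$ and thus $s$ outside $\widetilde{V}_n$. Combined with Proposition \ref{prop3} (unique endpoint per component of $J(f_a)$), this should force $\overline{\widetilde{V}_n \cap O_\star} \cap C \subset U$ for all sufficiently large $n$. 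The main obstacle I anticipate is controlling the effective size of the pulled-back sets $\widetilde{V}_n$: for $z_0$ outside the radial Julia set the geometry of $\widetilde{V}_n$ is not governed by Koebe-type distortion, so a delicate combination of Lemmas \ref{lem1} and \ref{lem2} together with compactness will be required to push this argument through.
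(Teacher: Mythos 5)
Your overall strategy (pull back the sets from Lemma \ref{lem1} along the orbit, use Lemma \ref{lem2} to keep fast-escaping points out, and use Proposition \ref{prop2}'s sets $O_n$ to cut down to the component $C$) is the same as the paper's, but there is a genuine gap at the decisive step, and it is exactly the one you flag yourself. Lemma \ref{lem2} gives, for each \emph{individual} $s\in A(f_a)$, \emph{some} $n$ with $|f_a^n(s)|$ large and $\eal(f_a^n(s))>0$; it gives no uniformity in $s$ and no ``for all sufficiently large $n$'' statement, and the pullbacks $\widetilde V_n=\mathrm{Comp}(z_0,f_a^{-n}(V^{(n)}))$ are not nested in $n$. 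So the claim that $\overline{\widetilde V_n\cap O_\star}\cap C\subset U$ ``for all sufficiently large $n$'' does not follow from anything you have established; a point $s$ excluded at stage $n$ may re-enter $\widetilde V_{n+1}$. The paper's resolution is not distortion control on a single pullback but a finite-intersection argument: it takes the compact set $S=C\cap\partial U\subset A(f_a)$, observes that the sets $S_n=\{s\in S: |f_a^n(s)|>2|f_a^n(z_0)|+\lambda+\delta,\ \eal(f_a^n(s))>0\}$ are relatively open and (by Lemma \ref{lem2}) cover $S$, extracts a finite subcover indexed by $\mathcal F$, and sets $V=\bigcap_{n\in\mathcal F}f_a^{-n}(V_n)$. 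Note also that the paper only needs $\overline V$ to avoid $C\cap\partial U$, not all of $C\setminus U$, because the final clopen set is $O_n\cap V\cap U\cap J_{\mathrm m}(f_a)$ --- the boundary contribution from $\partial U$ is killed by choosing $n$ with $\overline{O_n}\cap\overline V\cap\partial U\cap J(f_a)=\varnothing$. Your formulation, which demands a $W$ with $\partial W\subset F(f_a)\cup A(f_a)$ and $W\cap J_{\mathrm m}(f_a)\subset U$ outright, is strictly harder than what is needed.

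Two secondary problems would also need repair. First, $|f_a^n(s)|>|f_a^n(z_0)|+\lambda$ together with $\eal(f_a^n(s))>0$ does \emph{not} put $f_a^n(s)$ outside $V^{(n)}$: Lemma \ref{lem1} only bounds the real part of $V^{(n)}$, so $V^{(n)}$ may contain points of arbitrarily large imaginary part and hence arbitrarily large modulus. You must shrink $V^{(n)}$ by intersecting with the component of $f_a^n(z_0)$ in $\mathbb C\setminus\mathcal M$ (as you do only at level $0$), after which $|z|\le\eal(z)+|\imag(z)|\le 2|f_a^n(z_0)|+\lambda+\delta$ for $z\in V^{(n)}$ with $\eal(z)>0$; this is why Lemma \ref{lem2} is stated with the factor $2$ and an arbitrary $\kappa$. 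Second, your compactness reduction needs $\overline{\widetilde V_n\cap O_\star}$ to be compact, but $O_\star$ is unbounded in the positive real direction (it is a strip between consecutive arcs of $\mathcal M$) and the pullback $\widetilde V_n$ carries no a priori real-part bound, so $\widetilde V_n\cap O_\star$ need not be bounded; you must also intersect with the level-$0$ set $V^{(0)}$ to retain compactness.
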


\begin{proof}Let $z_0\in J_{\mathrm{m}}(f_a)$, and let $U$ be any bounded open subset of $\mathbb C$ with $z_0\in U$.  We will construct a relatively clopen subset of $J_{\mathrm{m}}(f_a)$ which contains $z_0$  and is contained in $U$. To that end, let   $C$ be the connected component of $z_0$ in $J(f_a)$.  Let $ S=C\cap \partial U$. By Proposition \ref{prop3} and \cite[Proposition 2.4(b),(c)]{vas} we have $$S\subset J(f_a)\setminus E(f_a)\subset A(f_a).$$

Let $\delta$ and $\mathcal M$ be given by Proposition \ref{prop4}. Let $\lambda>0$ be given by Lemma \ref{lem1}, and for each $n\in \mathbb N$ define $$S_n=\{s\in S:|f_a^n(s)|>2|f_a^n(z_0)|+ \lambda+\delta \text{ and }\eal(f_a^n(s))> 0 \}.$$ By Lemma \ref{lem2} and compactness of $S$,  there is a finite $\mathcal F\subset \mathbb N$ such that $S\subset \bigcup \{S_{n}:n\in \mathcal F\}$.   For each $n\in \mathcal F$, Lemma \ref{lem1} provides an open set    $V_n\subset \mathbb C$ such that:   
\begin{itemize}\renewcommand{\labelitemi}{\scalebox{.5}{{$\blacksquare$}}}
\item $f^{n}_a(z_0)\in V_n$, 
\item $\partial V_n\cap J(f_a)\subset A_{R}(f_a)$,  and 
\item  $\sup\{\eal(z):z\in V_n\} \leq  |f^{n}_a(z_0)|+\lambda.$   
\end{itemize}
Notice that we can shrink $V_n$ to obtain $$\sup \{|\imag (z)-\imag (f_a^{n}(z_0))|:z\in V_n\}\leq \delta$$ in addition to the three properties above; simply intersect $V_n$ with the connected component of $f^n_a(z_0)$ in $\mathbb C\setminus \mathcal M$. Now if $z\in V_n$ and $\eal(z)> 0$, then $$|z|\leq \eal(z)+|\imag(z)|\leq 2|f_a^{n}(z_0)|+\lambda+\delta.$$   Therefore    $f^{n}_a(S_{n})\cap  \overline {V_n}=\varnothing$. So $S_{n}\cap  \overline {f^{-n}_a(V_n)}=\varnothing.$

 Let $V=\bigcap \{f_a^{-n}(V_n):n\in \mathcal F\}$.  Then $V$ is open, and   \begin{equation} S\cap \overline V=\varnothing. \end{equation}
Further, $\partial f_a^{-n}(V_n)\cap J(f_a)\subset  f_a^{-n}[A_R(f_a)]\subset A(f_a)$ implies  \begin{equation}\partial V\cap J(f_a)\subset A(f_a).\end{equation}By Proposition \ref{prop2}, there is a collection of open sets   $\{O_n:n\in \mathbb N\}$ such that    \begin{equation}\partial O_n\subset F(f_a), \end{equation}
\begin{equation}\overline{O_{n}}\supset \overline{O_{n+1}}, \text{ and}\end{equation} 
\begin{equation}\bigcap_{n\in \mathbb N }\overline{O_n}\cap J(f_a)=C. \end{equation} 
By (5.1),  (5.4), (5.5),  and compactness of $\overline {O_n }\cap  \overline{V}\cap \partial U\cap  J(f_a)$, there exists $n\in \mathbb N$ such that 
  \begin{equation} \overline {O_{n} }\cap \overline{V}\cap \partial U\cap J(f_a)=\varnothing.\end{equation} Then $O_{n}\cap V\cap  U\cap  J_{\mathrm{m}}(f_a)$ is a relatively clopen subset of $J_{\mathrm{m}}(f_a)$ (it is closed in $J_{\mathrm{m}}(f_a)$ by  (5.2),   (5.3), and (5.6)). It contains $z_0$ and is contained in $U$.
  \end{proof}

Now the  topological type of $J_{\mathrm{m}}(f_a)$ can  be determined using the Alexandroff-Urysohn characterization of the irrationals. The Alexandroff-Urysohn characterization \cite[Theorem 1.9.8]{van} states that every topologically complete, nowhere compact, zero-dimensional space is homeomorphic to $\mathbb P$.    

A space $X$ is  \textit{topologically complete} if there exists a metric $d$ on $X$ such that $(X,d)$ is a complete metric space that is homeomorphic to $X$ in its original topology. $G_{\delta}$-subsets of $\mathbb C$ are topologically complete by \cite[Theorem A.6.3]{van}.  

A space $X$ is \textit{nowhere compact} if every compact subset of $X$ has empty interior in $X$.  We observe that if $X\subset Y\subset \mathbb C$,  $X$ is dense in $Y$, and $Y\setminus X$ is dense in  $Y$, then $X$ is nowhere compact.  For let $K$ be any compact subset of $X$ and let $x\in K$.  If $Y\setminus X$ is dense in $Y$, then there is a sequence of points $y_1,y_2,\ldots\in Y\setminus K$ converging to $x$.  By compactness of $K$ we have that  $Y\setminus K$ is open in $Y$. Thus, assuming $X$ is dense in $Y$, there is a sequence of points $x_1,x_2,\ldots\in  X\setminus K$ with $d(x_n,y_n)<\frac{1}{n}$. Then $x_n\to x$, proving that $K$ is not a neighborhood of $x$ in $X$.  Since $x$ was an arbitrary point of $K$, we conclude that $K$ has empty interior in $X$.

\begin{uc}If $a\in F(f_a)$ then $J_{\mathrm{m}}(f_a)\simeq J_{\mathrm{m}}(f_a) \cup \{\infty\}\simeq \mathbb P$.  \end{uc}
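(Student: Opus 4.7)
The plan is to apply the Alexandroff--Urysohn characterization of $\mathbb P$ stated immediately above the corollary. It suffices to verify, for each of $X = J_{\mathrm{m}}(f_a)$ and $X = J_{\mathrm{m}}(f_a)\cup\{\infty\}$, that $X$ is non-empty (automatic since $J_{\mathrm{r}}(f_a)\subset J_{\mathrm{m}}(f_a)$ is non-empty), zero-dimensional, topologically complete, and nowhere compact.

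Zero-dimensionality of $J_{\mathrm{m}}(f_a)$ is precisely Theorem~\ref{main}. The delicate point is extending this to $J_{\mathrm{m}}(f_a)\cup\{\infty\}$, where the only new case is to produce clopen neighborhoods of $\infty$. Given $M>0$, I would invoke the standard fact that a zero-dimensional separable metric space is strongly zero-dimensional, applied to the disjoint closed subsets $J_{\mathrm{m}}(f_a)\cap\overline{\mathbb D}(0,M)$ and $J_{\mathrm{m}}(f_a)\setminus\mathbb D(0,M+1)$ of $J_{\mathrm{m}}(f_a)$; this yields a bounded clopen set $W\subset J_{\mathrm{m}}(f_a)$ containing the first and disjoint from the second. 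The complement $(J_{\mathrm{m}}(f_a)\cup\{\infty\})\setminus W$ is then a clopen neighborhood of $\infty$ contained in $\{|z|>M\}\cup\{\infty\}$, because boundedness of $W$ prevents $\infty$ from being a limit point of $W$.

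For topological completeness I would exhibit both targets as $G_\delta$ subsets of the compact metric space $\widehat{\mathbb C}$. Here $J(f_a)\cup\{\infty\}$ is closed in $\widehat{\mathbb C}$, and each of the defining sets $A_\ell := \{z \in \mathbb C : |f_a^{\ell+n}(z)|\geq M^n(R)\text{ for all }n\geq 0\}$ of $A(f_a)$ is closed in $\mathbb C$ and equals $\bigcup_{N\in\mathbb N}(A_\ell\cap\overline{\mathbb D}(0,N))$, a countable union of sets that are compact, hence closed, in $\widehat{\mathbb C}$. Thus $A(f_a)=\bigcup_\ell A_\ell$ is $F_\sigma$ in $\widehat{\mathbb C}$ and $(J(f_a)\cup\{\infty\})\setminus A(f_a) = J_{\mathrm{m}}(f_a)\cup\{\infty\}$ is the required $G_\delta$. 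For nowhere compactness I would apply the excerpt's density criterion (which works verbatim in $\widehat{\mathbb C}$) with $Y=J(f_a)\cup\{\infty\}$: density of $X=J_{\mathrm{m}}(f_a)\cup\{\infty\}$ in $Y$ follows from the inclusion $E(f_a)\subset J_{\mathrm{m}}(f_a)$ combined with density of $E(f_a)$ in $J(f_a)$ (standard for exponential Cantor bouquets), while density of $Y\setminus X = A(f_a)$ in $Y$ follows from density of $A(f_a)$ in $J(f_a)$ together with $A(f_a)\subset I(f_a)$ accumulating at $\infty$.

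I expect the main obstacle to be the zero-dimensionality of $J_{\mathrm{m}}(f_a)\cup\{\infty\}$ at the point $\infty$, since Theorem~\ref{main} is tailored to bounded open sets $U\subset\mathbb C$ and supplies clopen neighborhoods only around finite points. The strong-zero-dimensionality reduction above bypasses this cleanly; a more hands-on alternative would be to rerun the proof of Theorem~\ref{main} with $U$ a spherical neighborhood of $\infty$, but then one loses the convenient fact ``$S\subset A(f_a)$'' used there and has to replace it by a separate covering/compactness argument on $J(f_a)\cap\{|z|=M\}$.
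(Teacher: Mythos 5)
Your overall architecture matches the paper's: verify the three hypotheses of the Alexandroff--Urysohn characterization for both spaces, getting zero-dimensionality from Theorem~\ref{main}, topological completeness from a $G_\delta$ representation, and nowhere compactness from the density criterion. Your treatment of the point $\infty$ (clopen neighborhoods via strong zero-dimensionality of separable metric zero-dimensional spaces, and the $G_\delta$ argument in $\widehat{\mathbb C}$) is a self-contained substitute for the paper's citation of Aarts--Nishiura on one-point extensions, and is if anything more explicit than the paper.

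However, there is a genuine error in your nowhere-compactness step: the inclusion $E(f_a)\subset J_{\mathrm{m}}(f_a)$ is false. The paper only establishes the reverse containment $J_{\mathrm{m}}(f_a)\subset E(f_a)$ (via $J(f_a)\setminus E(f_a)\subset A(f_a)$); there do exist \emph{fast escaping} endpoints, so $E(f_a)\cap A(f_a)\neq\varnothing$. You can see this from the paper itself: if $E(f_a)$ were contained in $J_{\mathrm{m}}(f_a)$ it would equal $J_{\mathrm{m}}(f_a)$ and hence be zero-dimensional by Theorem~\ref{main}, contradicting the Remark in Section~3.2 that $E(f_a)$ is \emph{not} zero-dimensional (since $\infty$ is an explosion point for it). So density of $J_{\mathrm{m}}(f_a)$ in $J(f_a)$ cannot be routed through density of $E(f_a)$. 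The paper's fix is cleaner and you should adopt it: $J_{\mathrm{m}}(f_a)$ and $A(f_a)$ are both infinite and completely invariant under $f_a$, so by Montel's theorem each is dense in $J(f_a)$. (Alternatively, repelling periodic points are dense in $J(f_a)$ and are obviously not fast escaping.) With that substitution the rest of your argument goes through.
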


\begin{proof}The Julia set $J(f_a)$ is closed in $\mathbb C$, and    $$A(f_a) =\bigcup_{\ell\in \mathbb N} f_a^{-\ell}[A_R(f_a)]$$ is $F_{\sigma}$ in $\mathbb C$.  So $J_{\mathrm{m}}(f_a)$ is a $G_{\delta}$-subset of $\mathbb C$ and is therefore   topologically complete. The sets $J_{\mathrm{m}}(f_a)$ and $J(f_a)\setminus J_{\mathrm{m}}(f_a)(=A(f_a))$ are infinite and completely invariant under $f_a$, so by  Montel's theorem they are both dense in $J(f_a)$. By the observation above (with $Y=J(f_a)$ and $X=J_{\mathrm{m}}(f_a)$),  $J_{\mathrm{m}}(f_a)$ is nowhere compact.  Finally,  $J_{\mathrm{m}}(f_a)$ is zero-dimensional by Theorem \ref{main}.   By  
    \cite[Theorem 1.9.8]{van} we have $J_{\mathrm{m}}(f_a)\simeq \mathbb P$. The one-point extension $J_{\mathrm{m}}(f_a)\cup \{\infty\}$ is also zero-dimensional by Theorem 8 and \cite[Theorem 3.11]{dim}.   Hence the characterization \cite[Theorem 1.9.8]{van} can be applied again to see that $J_{\mathrm{m}}(f_a)\cup \{\infty\}\simeq \mathbb P$.  \end{proof}

\begin{uc}If $a\in F(f_a)$ then $J_{\mathrm{r}}(f_a)$  and $J_{\mathrm{r}}(f_a)\cup\{\infty\}$ are zero-dimensional.\label{rad} \end{uc}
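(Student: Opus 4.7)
The plan is to derive this corollary almost immediately from the preceding results by exploiting the hereditary nature of zero-dimensionality. Recall that zero-dimensionality passes to arbitrary subspaces: if $Y$ has a basis of clopen sets and $X \subset Y$, then the traces $\{B \cap X : B \text{ clopen in } Y\}$ form a basis of clopen sets for $X$ in the subspace topology.

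The first step is to recall from the introduction the containment $J_{\mathrm{r}}(f_a) \subset J_{\mathrm{m}}(f_a)$, valid for every $a \in F(f_a)$. Combined with Theorem \ref{main} (or equivalently Corollary 9, which identifies $J_{\mathrm{m}}(f_a)$ as a copy of $\mathbb P$), this immediately gives that $J_{\mathrm{r}}(f_a)$ is zero-dimensional as a subspace of a zero-dimensional space.

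For the one-point extension, I would invoke Corollary 9 a second time: it establishes not just that $J_{\mathrm{m}}(f_a)$ is zero-dimensional but that $J_{\mathrm{m}}(f_a) \cup \{\infty\}$ is homeomorphic to $\mathbb P$, hence also zero-dimensional. Since $J_{\mathrm{r}}(f_a) \cup \{\infty\} \subset J_{\mathrm{m}}(f_a) \cup \{\infty\}$, heredity gives the second half of the statement.

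There is essentially no obstacle here; the work was already done in Theorem \ref{main} and Corollary 9. The only thing to be careful about is that the one-point extension $J_{\mathrm{r}}(f_a) \cup \{\infty\}$ is taken as a subspace of the Riemann sphere, which is consistent with its inclusion into $J_{\mathrm{m}}(f_a) \cup \{\infty\}$ viewed the same way, so the subspace topologies match and the heredity argument applies directly.
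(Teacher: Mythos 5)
Your argument is correct and is essentially identical to the paper's: both derive the result from Corollary 9 together with the inclusion $J_{\mathrm{r}}(f_a)\subset J_{\mathrm{m}}(f_a)$ and the fact that zero-dimensionality is hereditary. The extra care you take with the one-point extension (matching subspace topologies in the Riemann sphere) is sound and just makes explicit what the paper leaves implicit.
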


\begin{proof}This follows immediately from Corollary 9 and the fact $J_{\mathrm{r}}(f_a)\subset J_{\mathrm{m}}(f_a)$.\end{proof}

\section{Fatou's function}
The function  defined by  $f(z)=z+1+e^{-z}$ is called  \textit{Fatou's function} \cite{fat}.  
Here it is known that $J_{\mathrm{r}}(f)=\mathbb C\setminus  I(f)=J(f)\setminus  I(f)\subset J_{\mathrm{m}}(f)$ [citation to appear].

Previous work has shown that $J_{\mathrm{r}}(f)\cup\{\infty\}$  and $J_{\mathrm{m}}(f)\cup\{\infty\}$ are totally separated; see  \cite[Theorem 5.2]{evd} and \cite[Theorem 5.1]{vas}.   The corollaries below, together with  \cite[Theorem 3.11]{dim} (zero-dimensionality and one-point extensions), imply that both of these sets are in fact zero-dimensional.

\begin{uc}$J_{\mathrm{m}}(f)$ is homeomorphic to $\mathbb  P$.\end{uc}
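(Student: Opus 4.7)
The plan is to verify the three hypotheses of the Alexandroff-Urysohn characterization \cite[Theorem 1.9.8]{van}, exactly as in the proof of Corollary 9: that $J_{\mathrm{m}}(f)$ is topologically complete, nowhere compact, and zero-dimensional. Topological completeness is immediate by the $G_\delta$-argument of Corollary 9, since $J(f)$ is closed in $\mathbb{C}$ and $A(f) = \bigcup_{\ell\in\mathbb{N}} f^{-\ell}[A_R(f)]$ is $F_\sigma$ in $\mathbb{C}$. Nowhere compactness follows from Montel's theorem: both $J_{\mathrm{m}}(f)$ and $A(f)$ are completely invariant and infinite, hence dense in $J(f)$, and the density-based observation proved in Corollary 9 then yields that $J_{\mathrm{m}}(f)$ is nowhere compact.

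The substance of the proof is zero-dimensionality of $J_{\mathrm{m}}(f)$, which I would obtain by rerunning the argument of Theorem \ref{main} with $f$ in place of $f_a$. The same template applies: given $z_0\in J_{\mathrm{m}}(f)$ and a bounded open neighbourhood $U$ of $z_0$, first write the connected component $C$ of $z_0$ in $J(f)$ as an intersection of sets clopen in $J(f)$ whose boundaries lie in $F(f)$ (the analog of Propositions \ref{prop2}--\ref{prop3}), observe that $S := C\cap \partial U\subset A(f)$, and then use analogs of Lemmas \ref{lem1} and \ref{lem2} to assemble a finite intersection of preimages of open sets whose $J(f)$-boundaries lie in $A(f)$ and which separates $z_0$ from $S$. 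For Fatou's function, the hair structure of $J(f)$ and a suitable separating closed set $\mathcal{M}\subset F(f)$ (obtained as the preimage of a curve escaping through the Baker domain) are constructed in \cite[Section 5]{vas} and \cite{evd}, so the endpoint/component analysis of Section 3 carries over in the expected way.

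The main obstacle will be checking that the quantitative estimates of the two key lemmas transfer to $f(z) = z+1+e^{-z}$. Because fast escape for $f$ occurs along right half-planes rather than in the radial sense, the Lemma \ref{lem1} analog must be phrased in terms of real parts: one needs an open set $V\ni z_0$ with $\partial V\cap J(f)\subset A_R(f)$ and $\sup\{\eal(z): z\in V\}\leq \eal(z_0)+\lambda$, exploiting that $|e^{-z}|$ is small when $\eal(z)$ is large. Likewise, the Lemma \ref{lem2} analog should assert that for $s\in A(f)$ and $z_0\notin A(f)$ there exist arbitrarily large $n$ with $\eal(f^n(s))$ much larger than $\eal(f^n(z_0))$. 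Both estimates are essentially present in \cite[Section 5]{vas}; once they are in hand, the proof of Theorem \ref{main} transfers almost verbatim to give zero-dimensionality, and Alexandroff-Urysohn then delivers $J_{\mathrm{m}}(f)\simeq\mathbb{P}$.
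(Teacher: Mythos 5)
Your proposal takes a genuinely different route from the paper, and as written it has real gaps. The paper's proof of this corollary is a short transfer argument: the map $z\mapsto e^{-z}$ semiconjugates $f$ to $h(w)=e^{-1}we^{-w}$, the proof of \cite[Theorem 5.1]{vas} shows that $J_{\mathrm{m}}(f)$ is covered by a \emph{discrete} collection of homeomorphic copies of $J_{\mathrm{m}}(h)$, and $J_{\mathrm{m}}(h)\simeq J_{\mathrm{m}}(f_{-2})$ by \cite[Proposition 5.2]{vas}. Since zero-dimensionality is local and hereditary, Theorem \ref{main} with $a=-2$ immediately gives zero-dimensionality of $J_{\mathrm{m}}(f)$, and the completeness/nowhere-compactness argument of Corollary 9 then yields $J_{\mathrm{m}}(f)\simeq\mathbb P$. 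Your plan to rerun Sections 3--5 directly for $f$ avoids none of the hard work and, crucially, the part you dismiss as carrying over ``in the expected way'' is exactly where the difficulty lies: Propositions \ref{prop2}--\ref{prop4} rest on Rempe's structure theorem \cite[Theorem 9.1]{rem2} and the itinerary separation \cite[Proposition 6.15]{rem}, both of which are results about the exponential family, not about Fatou's function. You would need to establish the pinched-Cantor-bouquet structure, the one-endpoint-per-component statement, and the clopen decomposition of components for $J(f)$ from scratch --- which is precisely what the discrete covering by copies of $J(f_{-2})$'s Julia set hands you for free.

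There is also a concrete error in your quantitative setup. For Fatou's function the right half-plane lies in the invariant Baker domain, where orbits escape only linearly; the fast escaping set $A(f)$ lives where $|e^{-z}|$ dominates, i.e.\ where $\eal(z)\to-\infty$ rapidly. So the analogue of Lemma \ref{lem1} would have to keep $V$ away from the \emph{far left} (a lower bound on $\eal(z)$ over $V$, or an upper bound on $|f^n(z)|$), not satisfy $\sup\{\eal(z):z\in V\}\leq\eal(z_0)+\lambda$ as you propose --- that estimate keeps $V$ away from the Fatou set rather than from $A(f)$. Your verification of topological completeness and nowhere compactness is fine and matches the paper, but the zero-dimensionality step needs either the corrected direct analysis or, much more simply, the semiconjugacy reduction the paper actually uses.
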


\begin{proof}Define $h:\mathbb C\to \mathbb C$ by $h(z)=e^{-1}ze^{-z}$. The proof of \cite[Theorem 5.1]{vas} shows that  $J_{\mathrm{m}} (f )$ is covered by a discrete collection of  homeomorphic copies of $J_{\mathrm{m}}(h)$.  And  $J_{\mathrm{m}}(h)\simeq J_{\mathrm{m}}(f_{-2})$ by \cite[Proposition 5.2]{vas}. We can now apply Theorem 8 with $a=-2$ to see that $J_{\mathrm{m}}(f)$  is zero-dimensional. Further, the proof of Corollary 9 shows that $J_{\mathrm{m}}(f)\simeq \mathbb P$.  \end{proof}

\begin{uc}$J_{\mathrm{r}}(f)$ is zero-dimensional.\end{uc}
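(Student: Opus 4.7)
The plan is to mimic, verbatim, the argument that turned Corollary 9 into Corollary 10. At the start of Section 6 it is recorded that for Fatou's function $f(z)=z+1+e^{-z}$ one has
\[
J_{\mathrm{r}}(f)\;=\;\mathbb C\setminus I(f)\;=\;J(f)\setminus I(f)\;\subset\; J_{\mathrm{m}}(f),
\]
so $J_{\mathrm{r}}(f)$ sits inside $J_{\mathrm{m}}(f)$ as a subspace. The first (and essentially only) step is to invoke Corollary 11, which says $J_{\mathrm{m}}(f)\simeq\mathbb P$; in particular $J_{\mathrm{m}}(f)$ is zero-dimensional.

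Second, I would appeal to the fact that zero-dimensionality is hereditary on subspaces (a basis of clopen sets of the ambient space, intersected with the subspace, remains a basis of clopen sets). Combining this with the inclusion $J_{\mathrm{r}}(f)\subset J_{\mathrm{m}}(f)$ yields that $J_{\mathrm{r}}(f)$ is zero-dimensional, which is the full content of the statement. There is no real obstacle to overcome here, since all the analytical and topological work was already done in Corollary 11; this corollary bears precisely the same relation to Corollary 11 that Corollary 10 bears to Corollary 9.
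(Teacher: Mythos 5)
Your proposal is correct and is essentially identical to the paper's own proof, which likewise derives the statement from Corollary 11 together with the inclusion $J_{\mathrm{r}}(f)\subset J_{\mathrm{m}}(f)$ and the hereditary nature of zero-dimensionality.
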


\begin{proof}This follows from Corollary 11 and the fact $J_{\mathrm{r}}(f)\subset J_{\mathrm{m}}(f)$.
\end{proof}

\subsection*{Acknowledgements}We thank the referee for their careful reading and suggestions which greatly improved the paper.

\end{document}